\def\beq*{\begin{eqnarray*}}
\def\eeq*{\end{eqnarray*}}
\def\be{\begin{equation}}
\def\ee{\end{equation}}
\def\mcP{\mathcal{P}}
\def\mcT{\mathcal{T}}
\def\mcB{\mathcal{B}}
\def\mbP{\mathbb{P}}
\def\mbE{\mathbb{E}}
\def\mcF{\mathcal{F}}
\def\mcX{\mathcal{X}}
\def\Bin{\operatorname{Bin}}
\newtheorem{theorem}{Theorem}
\newtheorem{lemma}[theorem]{Lemma}
\newtheorem{corollary}[theorem]{Corollary}
\newtheorem{proposition}[theorem]{Proposition}
\newtheorem{definition}{Definition}
\numberwithin{subcase}{case}
\author[P. Hitczenko and A. Lohss]{Pawe{\l} Hitczenko\addressmark{1}\thanks{Partially supported by a grant from 
Simons Foundation (grant \#208766)} and Amanda Lohss\addressmark{1}}
\title[Corners in tree--like tableaux]{Corners in tree--like tableaux}
\address{\addressmark{1}Department of Mathematics, Drexel University, Philadelphia, 
PA  19104, USA}
\keywords{Tree--like tableaux, permutation tableaux, type--B permutation tableaux}
\begin{document}
\maketitle
\begin{abstract}
\paragraph{Abstract.}
 In this paper, we study tree--like tableaux, combinatorial objects which exhibit a natural tree structure and are connected to the partially asymmetric simple exclusion process (PASEP). There was a conjecture made on the total number of corners in tree--like tableaux and the total number of corners in symmetric tree--like tableaux. In this paper, we prove the first conjecture leaving the proof of the second conjecture to the full version of this paper. Our proofs are based on the bijection with permutation tableaux or type--B permutation tableaux and consequently, we also prove results for these tableaux.  

 \end{abstract}

\section{Introduction}
Tree--like tableaux are relatively new objects which were introduced in \cite{ABN}. They are in bijection with permutation tableaux and alternative tableaux but are interesting in their own right as they exhibit a natural tree structure (see \cite{ABN}). They also provide another avenue in which to study the partially asymmetric simple exclusion process (PASEP), an important model from statistical mechanics. See \cite{ABN} and \cite{LZ} for more details on the connection between tree--like tableaux and the PASEP. See also \cite{BUR}, \cite{CN},  \cite{CW4}, \cite{CW3}, \cite{PN}, \cite{SW} and \cite{XV} for more details on permutation and alternative tableaux. 

In \cite{LZ}, the expected number of occupied corners in tree--like tableaux and the number of occupied corners in symmetric tree--like tableaux were computed (see Section~\ref{sec:prel} for definitions). In addition, it was conjectured (see Conjectures 4.1 and 4.2 in \cite{LZ}) that the total number of corners in tree--like tableaux of size \begin{math}n\end{math} is \begin{math}n!\times\frac{n+4}{6}\end{math} and the total number of corners in symmetric tree--like tableaux of size \begin{math}2n+1\end{math} is \begin{math}2^n\times n!\times\frac{4n+13}{12}\end{math}.

We have proven both conjectures and in this paper, we will present the proof of the first conjecture (note that \cite{LZpers} was able to prove the first conjecture independently using a different method). The proof of the second conjecture will be given in the full version of this paper \cite{HL}. Our proofs are based on  the bijection with permutation tableaux or type--B permutation tableaux and consequently, we also have results for these tableaux (see Theorems~\ref{thm:exp-c} and~\ref{cornersTB} below for precise statements).

The rest of the paper is organized as follows. In the next section we introduce the necessary definitions and  notation. Section~\ref{seccorners} contains the proof of the conjecture for tree--like tableaux. Section~\ref{sec:symtab} develops the tools necessary to prove the second conjecture for symmetric tree--like tableaux. The proof then follows similarly to the proof of the first conjecture and will be left to the full version of this paper \cite{HL}.

\section{Preliminaries}\label{sec:prel}
A \emph{Ferrers diagram}, \begin{math}F\end{math}, is a left--aligned sequence of cells with weakly decreasing rows. The \emph{half--perimeter} of \begin{math}F\end{math} is the number of rows plus the number of columns. 
The \emph{border edges} of a Ferrers diagram are the edges of the southeast border, and the number of border edges is equal to the half--perimeter. We will occasionally refer to a border edge as a step (south or west). A \emph{shifted Ferrers diagram} is a diagram obtained from a Ferrers diagram with \begin{math}k\end{math} columns by adding \begin{math}k\end{math} rows above it of lengths \begin{math}k,(k-1),\dots,1\end{math}, respectively. The half--perimeter of the shifted Ferrers diagram is the same as the original Ferrers diagram (and similarly, the border edges are the same).  The right--most cells of added rows are called \emph{diagonal cells}. 

Let us recall the following two definitions introduced in \cite{ABN} and \cite{SW}, respectively.
 \begin{definition}\label{T}
A tree--like tableau of size \begin{math}n\end{math} is a Ferrers diagram of half-perimeter \begin{math}n+1\end{math} with some cells (called pointed cells) filled with a point according to the following rules:
\begin{enumerate}
\item The cell in the first column and first row is always pointed (this point is known as the root point). \label{T1}
\item Every row and every column contains at least one pointed cell. \label{T2}
\item For every pointed cell, all the cells above are empty or all the cells to the left are empty. \label{T3}
\end{enumerate}
\end{definition}

\begin{definition}\label{P}
A permutation tableau of size \begin{math}n\end{math} is a Ferrers diagram of half--perimeter \begin{math}n\end{math} filled with \begin{math}0\end{math}'s and \begin{math}1\end{math}'s according to the following rules: 
\begin{enumerate}
\item There is at least one \begin{math}1\end{math} in every column. \label{P1}
\item There is no \begin{math}0\end{math} with a \begin{math}1\end{math} above it and a \begin{math}1\end{math} to the left of it simultaneously. \label{P2} 
\end{enumerate}
\end{definition}
We will also need a notion of type--B tableaux originally introduced in \cite{LW}. Our definition follows a more explicit description 
given in \cite[Section~4]{CK}.
\begin{definition}\label{B}
A type--B permutation tableau of size \begin{math}n\end{math} is a shifted Ferrers diagram of half--perimeter \begin{math}n\end{math} filled with \begin{math}0\end{math}'s and \begin{math}1\end{math}'s according to the following rules:
\begin{enumerate}
\item There is at least one \begin{math}1\end{math} in every column. \label{B1}
\item There is no \begin{math}0\end{math} with a \begin{math}1\end{math} above it and a \begin{math}1\end{math} to the right of it simultaneously. \label{B2} 
\item If one of the diagonal cells contains a \begin{math}0\end{math} (called a diagonal \begin{math}0\end{math}), then all the cells in that row are \begin{math}0\end{math}. \label{B3}
\end{enumerate}
\end{definition}

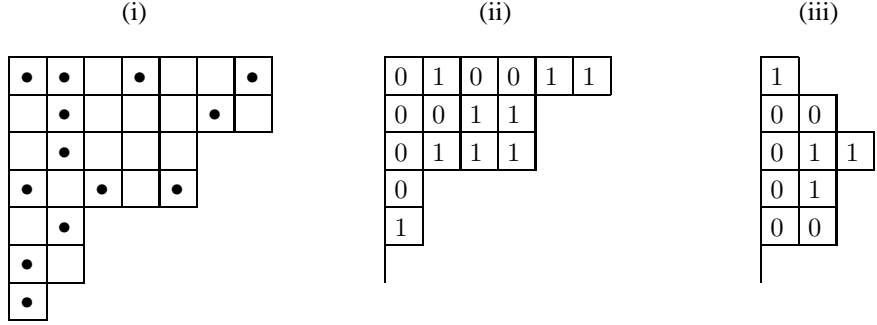
\begin{figure}[htbp]
\setlength{\unitlength}{.5cm}
\begin{center}
\begin{picture} (22,8)

\put(3, 8){(i)}
\put(12.5, 8){(ii)}
\put(21, 8){(iii)}
\put(0,0){\line(0,1){7}}
\put(1,0){\line(0,1){7}}
\put(2,1){\line(0,1){6}}
\put(3,3){\line(0,1){4}}
\put(4,3){\line(0,1){4}}
\put(5,3){\line(0,1){4}}
\put(6,5){\line(0,1){2}}
\put(7,5){\line(0,1){2}}

\put(0,7){\line(1,0){7}}
\put(0,6){\line(1,0){7}}
\put(0,5){\line(1,0){7}}
\put(0,4){\line(1,0){5}}
\put(0,3){\line(1,0){5}}
\put(0,2){\line(1,0){2}}
\put(0,1){\line(1,0){2}}
\put(0,0){\line(1,0){1}}

\put(4.3, 3.3){\begin{math}\bullet\end{math}}
\put(5.3, 5.3){\begin{math}\bullet\end{math}}
\put(6.3, 6.3){\begin{math}\bullet\end{math}}
\put(3.3, 6.3){\begin{math}\bullet\end{math}}
\put(2.3, 3.3){\begin{math}\bullet\end{math}}
\put(1.3, 2.3){\begin{math}\bullet\end{math}}
\put(1.3, 4.3){\begin{math}\bullet\end{math}}
\put(1.3, 5.3){\begin{math}\bullet\end{math}}
\put(1.3, 6.3){\begin{math}\bullet\end{math}}
\put(0.3, 3.3){\begin{math}\bullet\end{math}}
\put(0.3, 1.3){\begin{math}\bullet\end{math}}
\put(0.3, 0.3){\begin{math}\bullet\end{math}}
\put(0.3, 6.3){\begin{math}\bullet\end{math}}


\put(10,1){\line(0,1){6}}
\put(11,2){\line(0,1){5}}
\put(12,4){\line(0,1){3}}
\put(13,4){\line(0,1){3}}
\put(14,4){\line(0,1){3}}
\put(15,6){\line(0,1){1}}
\put(16,6){\line(0,1){1}}

\put(10,7){\line(1,0){6}}
\put(10,6){\line(1,0){6}}
\put(10,5){\line(1,0){4}}
\put(10,4){\line(1,0){4}}
\put(10,3){\line(1,0){1}}
\put(10,2){\line(1,0){1}}

\put(10.25, 2.25){\begin{math}1\end{math}}
\put(10.25, 3.25){\begin{math}0\end{math}}
\put(10.25, 4.25){\begin{math}0\end{math}}
\put(10.25, 5.25){\begin{math}0\end{math}}
\put(10.25, 6.25){\begin{math}0\end{math}}

\put(11.25, 6.25){\begin{math}1\end{math}}
\put(11.25, 5.25){\begin{math}0\end{math}}
\put(11.25, 4.25){\begin{math}1\end{math}}

\put(12.25, 6.25){\begin{math}0\end{math}}
\put(12.25, 5.25){\begin{math}1\end{math}}
\put(12.25, 4.25){\begin{math}1\end{math}}

\put(13.25, 6.25){\begin{math}0\end{math}}
\put(13.25, 5.25){\begin{math}1\end{math}}
\put(13.25, 4.25){\begin{math}1\end{math}}

\put(14.25, 6.25){\begin{math}1\end{math}}

\put(15.25, 6.25){\begin{math}1\end{math}}


\put(20,1){\line(0,1){6}}
\put(21,2){\line(0,1){5}}
\put(22,2){\line(0,1){4}}
\put(23,4){\line(0,1){1}}

\put(20,7){\line(1,0){1}}
\put(20,6){\line(1,0){2}}
\put(20,5){\line(1,0){3}}
\put(20,4){\line(1,0){3}}
\put(20,3){\line(1,0){2}}
\put(20,2){\line(1,0){2}}

\put(20.25, 6.25){\begin{math}1\end{math}}

\put(20.25, 5.25){\begin{math}0\end{math}}
\put(21.25, 5.25){\begin{math}0\end{math}}

\put(21.25, 4.25){\begin{math}1\end{math}}
\put(20.25, 4.25){\begin{math}0\end{math}}
\put(22.25, 4.25){\begin{math}1\end{math}}

\put(20.25, 3.25){\begin{math}0\end{math}}
\put(21.25, 3.25){\begin{math}1\end{math}}

\put(21.25, 2.25){\begin{math}0\end{math}}
\put(20.25, 2.25){\begin{math}0\end{math}}

\end{picture}
\caption{(i) A tree--like tableau of size $13$. (ii) A permutation tableau of size $12$. (iii) A type-B permutation tableau of size $6$.}
\end{center}
\end{figure}

Let \begin{math}\mathcal{T}_n\end{math} be the set of all tree--like tableaux of size \begin{math}n\end{math}, \begin{math}\mathcal{P}_n\end{math} denote the set of all permutation tableaux of size \begin{math}n\end{math}, and \begin{math}\mathcal{B}_n\end{math} denote the set of all type--B permutation tableaux of size \begin{math}n\end{math}. In addition to these tableaux, we are also interested in \emph{symmetric tree--like tableaux}, a subset of tree--like tableaux which are symmetric about their main diagonal (see \cite[Section 2.2]{ABN} for more details). 
As noticed in \cite{ABN}, the size of a symmetric tree--like tableaux must be odd, and thus, we let \begin{math}\mathcal{T}_{2n+1}^{sym}\end{math} denote the set of all symmetric tree--like tableaux of size \begin{math}2n+1\end{math}. It is a well--known fact that \begin{math}|\mathcal{P}_n|=n!\end{math} and \begin{math}|\mathcal{B}_n|=2^nn!\end{math}. Consequently, \begin{math}|\mathcal{T}_n|=n!\end{math} and \begin{math}|\mathcal{T}_{2n+1}^{sym}|=2^nn!\end{math} since by \cite{ABN}, there are bijections between these objects. 
We let  \begin{math}\mcX_n \in\{\mcT_n, \mcT^{sym}_{2n+1}, \mcP_n,  \mcB_n\}\end{math}  be any of the four sets of tableaux defined above. 

In permutation tableaux and type--B permutation tableaux, a \emph{restricted \begin{math}0\end{math}} is a \begin{math}0\end{math} which has a \begin{math}1\end{math} above it in the same column. An \emph{unrestricted row} is a row which does not contain any restricted \begin{math}0\end{math}'s (and for type--B permutation tableaux, 
also does not contain a diagonal \begin{math}0\end{math}). We let \begin{math}U_n(T)\end{math} denote the number of unrestricted rows in a tableau \begin{math}T\end{math} of size \begin{math}n\end{math}. It is also convenient to denote a topmost \begin{math}1\end{math} in a column by \begin{math}1_{T}\end{math} and a right-most restricted \begin{math}0\end{math} by \begin{math}0_{R}\end{math}. 

\emph{Corners} of a Ferrers diagram (or the associated tableau) are the cells in which both the right and bottom edges are border edges (i.e. a south step followed by a west step). In  tree--like tableaux (symmetric or not) \emph{occupied corners} are corners that contain a point. 

Our proofs will rely on techniques developed in \cite{CH} (see also \cite{HJ}). These two papers used probabilistic language and we adopt it here, too. Thus, instead of talking about the number of corners in tableaux we let \begin{math}\mbP_n\end{math}  be a probability distribution on \begin{math}\mcX_n\end{math} defined by 
  \begin{equation}\label{prob}\mbP_n(T)=\frac1{|\mcX_n|},\quad T\in\mcX_n,\end{equation}
 and we consider a random variable \begin{math}C_n\end{math} on the probability space \begin{math}(\mcX_n,\mbP_n)\end{math} defined by
 \[C_n(T)=k\quad \mbox{if and only if \begin{math}T\end{math} has \begin{math}k\end{math} corners},\quad T\in\mcX_n,\quad k\ge0.\] For convenience, let \begin{math}S_k\end{math} indicate that the \begin{math}k^{th}\end{math} step (border edge) is south and \begin{math}W_k\end{math} indicate that the \begin{math}k^{th}\end{math} step is west. Thus, 
\begin{equation}\label{eq:corners}
C_n=\sum_{k=1}^{n-1}I_{S_k,W_{k+1}},
\end{equation}
where \begin{math}I_A\end{math} is the indicator random variable of the event \begin{math}A\end{math}. 

A tableau chosen from \begin{math}\mcX_n\end{math} according to the probability measure \begin{math}\mbP_n\end{math} is usually referred to as a random tableau of size \begin{math}n\end{math} and \begin{math}C_n\end{math} is referred to as the number of corners in a random tableau of size \begin{math}n\end{math}. We let \begin{math}\mbE_n\end{math} denote the expected value with respect to the measure \begin{math}\mbP_n\end{math}. If \begin{math}c(\mcX_n)\end{math} denotes the total number of corners in tableaux in \begin{math}\mcX_n\end{math} then, in view of \eqref{prob}, we have the following simple relation:
\be\label{exp}\mbE_nC_n=\frac{c(\mcX_n)}{|\mcX_n|}\quad\mbox{or, equivalently, }\quad c(\mcX_n)=|\mcX_n|\, \mbE_nC_n.\ee

\section{Corners in Tree-Like Tableaux}\label{seccorners}
The main result of this section 
is the proof of the first conjecture of Laborde Zubieta. 
\begin{theorem} (see \cite[Conjecture~4.1]{LZ})\label{thm:conj} For \begin{math}n\ge2\end{math} we have
\[c(\mcT_n)=n!\times\frac{n+4}6.\]
\end{theorem}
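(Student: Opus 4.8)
By \eqref{exp} and the fact that $|\mcT_n|=n!$, the theorem is equivalent to the statement that $\mbE_n C_n=(n+4)/6$ for the uniformly random tree--like tableau of size $n$. I would not try to evaluate this expectation on $\mcT_n$ directly; instead the plan is to transport the computation through the size--preserving bijection $\Phi\colon\mcP_n\to\mcT_n$ of \cite{ABN}, turning $\mbE_n C_n$ into the expectation of an explicit statistic of a uniformly random permutation tableau of size $n$, where the probabilistic machinery of \cite{CH} (see also \cite{HJ}) applies. Combined with \eqref{eq:corners}, it then suffices to understand, for each position $k$, the probability that the two consecutive border steps $S_k,W_{k+1}$ of $T=\Phi(P)$ form a corner, expressed in terms of $P$.

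The first and central step is to set up a dictionary between the southeast border of $T=\Phi(P)$ and the combinatorial data of $P$. Since $T$ has half--perimeter $n+1$ while $P$ has half--perimeter $n$, the map $\Phi$ is not shape--preserving: passing from $P$ to $T$ inserts one extra border step and redistributes the ones/points accordingly. I would track precisely where this extra step is inserted and how the rows, columns, and the distinguished entries $1_{T}$, $0_{R}$ of $P$ translate into south/west steps of $T$, and then read off a local criterion for when $S_k,W_{k+1}$ is a corner of $T$. I expect this criterion to take the form: a corner of $T$ is either a corner of $P$ (a south--then--west pair of $P$'s border), possibly subject to a side condition on the cell sitting there in $P$ (whether its row is unrestricted, or whether the relevant column's topmost one lies in it), together with a bounded number of ``boundary'' corners near the two ends of the border whose presence depends only on a simple feature of $P$. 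Rewriting $C_n$ on $\mcT_n$ as the corresponding sum of indicator random variables on $\mcP_n$ is the heart of the proof.

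With the dictionary in hand, $\mbE_n C_n=\sum_k\mbP_n(\text{$k$-th position is a corner of }T)$ becomes a sum of probabilities of explicit local events on a random permutation tableau. I would evaluate each of these by the usual route: condition on the shape (equivalently, on which of the border steps are south), use the description of a uniformly random permutation tableau of a given shape, and invoke the formulas of \cite{CH} for quantities such as the probability that a prescribed row is unrestricted, the law of the topmost ones, and the moments of $2^{U_n}$. Summing the resulting closed forms over $k$ and simplifying should collapse to $(n+4)/6$. As an independent check --- and an alternative to the probabilistic evaluation --- one can instead feed the dictionary into the size--increasing insertion on permutation (or tree--like) tableaux: this produces a first--order linear recurrence for $c(\mcT_n)$ whose solution with base value $c(\mcT_2)=2$ is $n!\,(n+4)/6$, close in spirit to the independent argument of \cite{LZpers}.

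The main obstacle is the dictionary step. Because $\Phi$ changes the shape, corners of $T$ are genuinely not the corners of $P$, and one must control the inserted border step and the accompanying relabeling carefully enough to obtain a clean \emph{local} characterization of corners --- the delicate points being the two ``boundary'' corners at the NE and SW ends of the border path, which are exactly the cases where the constraints \ref{P1}--\ref{P2} interact with the extremal rows and columns of the diagram. Once that characterization is pinned down correctly, the remaining work --- the per--position probability computations and the final algebraic simplification to $(n+4)/6$ --- is routine if somewhat lengthy bookkeeping, and the hypothesis $n\ge 2$ enters only through the base of the recurrence and the boundary analysis.
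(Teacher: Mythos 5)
Your proposal follows essentially the same route as the paper: transport the count through the \cite{ABN} bijection to permutation tableaux, observe that corners of the tree--like tableau are the corners of the permutation tableau plus a boundary correction (which the paper pins down as exactly $+1$ when the last border edge of the permutation tableau is vertical, contributing $(n-1)!$ in total), and then compute $\mbP_n(I_{S_k,W_{k+1}})$ for each $k$ with the $2^{U_n}$--measure--change and $\mathcal{L}(U_n|\mcF_{n-1})=1+\Bin(U_{n-1})$ machinery of \cite{CH} and \cite{HJ} before summing over $k$. The ``dictionary'' you flag as the main obstacle turns out to be the simple relation \eqref{corners}, and the rest of your plan matches the paper's Theorems~\ref{onecorner} and~\ref{thm:exp-c}.
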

To prove this, we will use the bijection between tree--like tableaux and permutation tableaux. According to Proposition~1.3 of \cite{ABN}, there exists a bijection between permutation tableaux and tree--like tableaux which transforms a tree--like tableau of shape \begin{math}F\end{math} to a permutation tableau of shape \begin{math}F'\end{math} which is obtained from \begin{math}F\end{math} by removing the SW--most edge from \begin{math}F\end{math} and  the cells of the left--most column (see Figure~\ref{TPB}). 

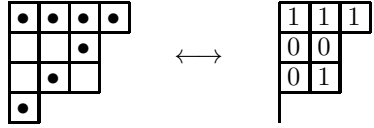
\begin{figure}[htbp]
\setlength{\unitlength}{0.4cm}
\begin{center}
\begin{picture}
(0,0)(7.5,6)\thicklines

\put(10,0){\line(0,1){4}}
\put(11,1){\line(0,1){3}}
\put(12,1){\line(0,1){3}}
\put(13,3){\line(0,1){1}}

\put(10,1){\line(1,0){2}}
\put(10,2){\line(1,0){2}}
\put(10,3){\line(1,0){3}}
\put(10,4){\line(1,0){3}}

\put(10.25, 1.25){\begin{math}0\end{math}}
\put(10.25, 2.25){\begin{math}0\end{math}}
\put(11.25, 2.25){\begin{math}0\end{math}}

\put(11.25, 1.25){\begin{math}1\end{math}}
\put(10.25, 3.25){\begin{math}1\end{math}}
\put(11.25, 3.25){\begin{math}1\end{math}}
\put(12.25, 3.25){\begin{math}1\end{math}}


\put(1,0){\line(0,1){4}}
\put(2,0){\line(0,1){4}}
\put(3,1){\line(0,1){3}}
\put(4,1){\line(0,1){3}}
\put(5,3){\line(0,1){1}}

\put(1,0){\line(1,0){1}}
\put(1,1){\line(1,0){3}}
\put(1,2){\line(1,0){3}}
\put(1,3){\line(1,0){4}}
\put(1,4){\line(1,0){4}}

\put(6.5,2){\begin{math}\longleftrightarrow\end{math}}

\put(1.25, 0.25){\begin{math}\bullet\end{math}}
\put(1.25, 3.25){\begin{math}\bullet\end{math}}
\put(2.25, 1.25){\begin{math}\bullet\end{math}}
\put(2.25, 3.25){\begin{math}\bullet\end{math}}
\put(3.25, 3.25){\begin{math}\bullet\end{math}}
\put(3.25, 2.25){\begin{math}\bullet\end{math}}
\put(4.25, 3.25){\begin{math}\bullet\end{math}}

\end{picture}

\vspace{3cm}
\caption{An example of the bijection between permutation tableaux and tree--like tableaux of size $7$.}\label{TPB}
\end{center}

\end{figure}
The number of corners in \begin{math}F\end{math} is the same as the number of corners in \begin{math}F'\end{math} if the last edge of \begin{math}F'\end{math} is horizontal and it is one more than the number of corners in \begin{math}F'\end{math} if the last edge of \begin{math}F'\end{math} is vertical. Furthermore, as is clear from a recursive construction described in \cite[Section~2]{CH}, any permutation tableau of size \begin{math}n\end{math} whose last edge is vertical is obtained as the unique extension of a permutation tableau of size \begin{math}n-1\end{math}. Therefore, there are \begin{math}(n-1)!\end{math} such   tableaux and we have a simple relation
\begin{equation}\label{corners}c(\mcT_n)=c(\mcP_n)+|\{P\in\mcP_n:\ S_n\}|=c(\mcP_n)+(n-1)!.\end{equation}
Thus, it suffices to determine the number of corners in permutation tableaux of size \begin{math}n\end{math}. Since \begin{math}|\mcP_n|=n!\end{math}, Equation~\eqref{exp} becomes
\be \label{expP} c(\mcP_n)=n!\, \mbE_nC_n.\ee
In order to determine the number of corners in permutation tableaux, we first have the following result.
\begin{theorem}\label{onecorner}
For permutation tableaux of size \begin{math}n\end{math}, the probability of having a corner with border edges \begin{math}k\end{math} and \begin{math}k+1\end{math} is given by
\beq*\label{1corner}
\mbP_n\left(I_{S_k,W_{k+1}}\right)=
\frac{n-k+1}{n}-\frac{(n-k)^2}{n(n-1)}.
\eeq*
\end{theorem}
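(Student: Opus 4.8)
The plan is to write each corner indicator as a difference of boundary-step indicators and then transport the two resulting probabilities to uniformly random permutations via the standard bijection $\mcP_n\leftrightarrow S_n$. Concretely, for $1\le k\le n-1$ the $(k+1)$-st boundary step is either south or west, so $I_{S_k}=I_{S_k,S_{k+1}}+I_{S_k,W_{k+1}}$ and hence
\[
\mbP_n\bigl(I_{S_k,W_{k+1}}\bigr)=\mbP_n(S_k)-\mbP_n(S_k,S_{k+1}),
\]
so it is enough to evaluate $\mbP_n(S_k)$ and $\mbP_n(S_k,S_{k+1})$.

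Here I would use the bijection between permutation tableaux of size $n$ and permutations of $[n]$ employed in \cite{CH} (originating in \cite{SW}) together with its key property: it carries the shape of a tableau --- equivalently the south/west pattern of the $n$ edges of its southeast border, indexed $1,\dots,n$ from the northeast corner --- to the weak-excedance set of the associated permutation, the $k$-th edge being south exactly when $\sigma(k)\ge k$ (so, consistently, the first edge is always south). Since the bijection pushes $\mbP_n$ forward to the uniform measure on $S_n$, for a uniformly random $\sigma\in S_n$ we obtain
\[
\mbP_n(S_k)=\mbP_n\bigl(\sigma(k)\ge k\bigr),\qquad
\mbP_n(S_k,S_{k+1})=\mbP_n\bigl(\sigma(k)\ge k,\ \sigma(k+1)\ge k+1\bigr).
\]
(Should the particular form of the bijection produce $\sigma^{-1}$ rather than $\sigma$, this is harmless since $\sigma\mapsto\sigma^{-1}$ preserves the uniform measure.)

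The remaining step is an elementary count. As $\sigma(k)$ is uniform on $[n]$ we get $\mbP_n(\sigma(k)\ge k)=(n-k+1)/n$. For the joint event, $(\sigma(k+1),\sigma(k))$ can be chosen in $(n-k)(n-k)$ ways --- $n-k$ values in $\{k+1,\dots,n\}$ for $\sigma(k+1)$, and then $n-k$ of the remaining values in $\{k,\dots,n\}$ for $\sigma(k)$ --- and the other $n-2$ entries in $(n-2)!$ ways, whence $\mbP_n(S_k,S_{k+1})=(n-k)^2(n-2)!/n!=(n-k)^2/(n(n-1))$; subtracting gives the claimed identity. I expect no genuine obstacle: the one non-routine input is the shape $\leftrightarrow$ weak-excedance-set property of the bijection, which is classical and is already basic to \cite{CH}, while everything else is a short computation about two values of a uniformly random permutation. (One could also avoid the bijection and read off $\mbP_n(S_k)$ and $\mbP_n(S_k,S_{k+1})$ directly from the recursive construction of permutation tableaux in \cite{CH}, at the cost of more bookkeeping.)
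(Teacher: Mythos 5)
Your proof is correct, but it takes a genuinely different route from the paper's. You split the corner indicator as $\mbP_n\left(I_{S_k,W_{k+1}}\right)=\mbP_n(S_k)-\mbP_n(S_k,S_{k+1})$ and transport both terms to a uniformly random permutation of $[n]$ via the bijection of \cite{SW}, using its shape property (the $k$-th border edge, labelled from the northeast, is south exactly when $k$ is a weak excedance); the two probabilities then come from an elementary count, and both your counts and the final identity check out (e.g.\ at $k=n-1$ both routes give $\frac2n-\frac1{n(n-1)}$, and the $n=2,3$ cases agree with direct enumeration). The paper instead stays entirely inside the recursive probabilistic framework of \cite{CH} and \cite{HJ}: it applies the measure-change identity \eqref{relmeas} repeatedly, reduces the size from $n$ down to $k-1$ using the conditional law $\mathcal{L}(U_m|\mcF_{m-1})=1+\Bin(U_{m-1})$ of the number of unrestricted rows, and closes the computation with the generating function $\mbE_m z^{U_m}=\Gamma(z+m)/(\Gamma(z)\,m!)$. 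Your argument is shorter and more transparent, at the cost of importing the shape/weak-excedance correspondence as a black box; that correspondence is indeed classical and already underlies \cite{CH}, so this is a legitimate citation rather than a gap, though a careful write-up should state precisely which version of the bijection (and which labelling convention for the border edges) is being used. What the paper's heavier machinery buys is reusability: the identical conditional-expectation computation is carried over verbatim to type--B permutation tableaux in Section~\ref{sec:symtab} (Theorem~\ref{1cornerB}), where your permutation-side count would require a separate analogue of the excedance correspondence for signed permutations.
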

\begin{proof}
The theorem can be proven by using techniques developed in \cite{CH}. Specifically, if \begin{math}k+1\le n-1\end{math} then  \begin{math}I_{S_k,W_{k+1}}\end{math} is a random variable on \begin{math}\mcP_{n-1}\end{math} (denoted by \begin{math}\mcT_{n-1}\end{math} in \cite{CH} and \cite{HJ}). A relationship between the measures on \begin{math}\mcP_{n}\end{math} and \begin{math}\mcP_{n-1}\end{math} was derived in \cite{CH} and is given by (see \cite[Equation~(7)]{CH} and \cite[Section~2, Equation~(2.1)]{HJ}),
\be\label{relmeas}
\mbE_n X_{n-1}=\frac{1}{n}\mbE_{n-1}(2^{U_{n-1}}X_{n-1})
\ee
where \begin{math}X_{n-1}\end{math} is any random variable defined on \begin{math}\mcP_{n-1}\end{math}.

Therefore,
\beq*
\mbP_n\left(I_{S_k,W_{k+1}}\right)&=&
\mbE_n\left(I_{S_k,W_{k+1}}\right)=\frac{1}{n}\mbE_{n-1}\left(2^{U_{n-1}}I_{S_k,W_{k+1}}\right)\\
&=&\frac{1}{n}\mbE_{n-1}\mbE\left(2^{U_{n-1}}I_{S_k,W_{k+1}}|\mcF_{n-2}\right),
\eeq*
where \begin{math}\mcF_{n-2}\end{math} is a \begin{math}\sigma\end{math}--subalgebra on \begin{math}\mcP_{n-1}\end{math} obtained by grouping into one set all tableaux in \begin{math}\mcP_{n-1}\end{math} that are obtained by extending the same tableau in \begin{math}\mcP_{n-2}\end{math} (we refer to \cite[Section~2]{HJ} for a detailed explanation).
Now, if \begin{math}k+1\leq n-2\end{math} then \begin{math}I_{S_k,W_{k+1}}\end{math} is measurable with respect to the \begin{math}\sigma\end{math}-algebra \begin{math}\mcF_{n-2}\end{math}. Thus by the properties of conditional expectation the above is:
\beq*
\mbE_n\left(I_{S_k,W_{k+1}}\right)&=&\frac{1}{n}\mbE_{n-1}I_{S_k,W_{k+1}}\mbE\left(2^{U_{n-1}}|\mcF_{n-2}\right).\\
\eeq*

By \cite[Equation~(4)]{CH}, the conditional distribution of \begin{math}U_n\end{math} given \begin{math}U_{n-1}\end{math} is given by
\[
\mathcal{L}(U_n|\mcF_{n-1})=1+\Bin(U_{n-1}),
\]
where \begin{math}\Bin(m)\end{math} denotes a binomial random variable with parameters \begin{math}m\end{math} and \begin{math}1/2\end{math}. By this result and the fact that \begin{math}\mbE a^{\Bin(m)}=\left(\frac{a+1}{2}\right)^m\end{math},
\begin{eqnarray}
\frac{1}{n}\mbE_{n-1}I_{S_k,W_{k+1}}\mbE\left(2^{U_{n-1}}|\mcF_{n-2}\right)&=&
\frac{1}{n}\mbE_{n-1}I_{S_k,W_{k+1}}\mbE\left(2^{1+\Bin(U_{n-2})}|\mcF_{n-2}\right)\nonumber\\
&=& \frac{2}{n}\mbE_{n-1}I_{S_k,W_{k+1}}\left(\frac{3}{2}\right)^{U_{n-2}}\nonumber\\
&=&\frac{2}{n(n-1)}\mbE_{n-2}I_{S_k,W_{k+1}}
3^{U_n-2}\label{binreduction}
\end{eqnarray}
where the last step follows from~\eqref{relmeas}.
Iterating \begin{math}(n-1)-(k+1)\end{math} times, we obtain
\begin{equation}\label{FC}
\frac{2\cdot3\cdot\dots\cdot(n-k-1)}{n(n-1)\cdot\dots\cdot(k+2)}\mbE_{k+1}I_{S_k,W_{k+1}}\left(n-k\right)^{U_{k+1}}.
\end{equation}
Thus, we need to compute 
\be\label{cond_exp}\mbE_{k+1}I_{S_k,W_{k+1}}(n-k)^{U_{k+1}}
\ee
for \begin{math}1\le k\le n-1\end{math} (note that \begin{math}k+1=n\end{math} gives \begin{math}\mbE_{n}I_{S_{n-1},W_n}\end{math} which is exactly the summand omitted earlier by the restriction \begin{math}k+1\le n-1\end{math}). 
 This can be computed as follows. First, by the tower property of the conditional expectation and the fact that \begin{math}S_k\end{math} is \begin{math}\mcF_k\end{math}--measurable, we obtain  
\[\mbE_{k+1}I_{S_k,W_{k+1}}(n-k)^{U_{k+1}}=\mbE_{k+1}I_{S_k}\mbE(I_{W_{k+1}}(n-k)^{U_{k+1}}|\mcF_k).
\]
And now
\[\mbE(I_{W_{k+1}}(n-k)^{U_{k+1}}|\mcF_k)=\mbE((n-k)^{U_{k+1}}|\mcF_k)-\mbE(I_{S_{k+1}}(n-k)^{U_{k+1}}|\mcF_k)
\]
because the two indicators are complementary. 
The first conditional expectation  on the right--hand side, by a computation similar to~\eqref{binreduction} (see also \cite[Equation~(2.2)]{HJ}) is 
\be\label{1st}(n-k)\mbE\left((n-k)^{U_{k+1}}|\mcF_k\right)=(n-k)\left(\frac{n-k+1}2\right)^{U_k}.\ee
To compute the second conditional expectation, note that on the set \begin{math}S_{k+1}\end{math}, \begin{math}U_{k+1}=1+U_k\end{math} so that  
\beq*
\mbE(I_{S_{k+1}}(n-k)^{U_{k+1}}|\mcF_k)&=&
(n-k)^{1+U_k}\mbE(I_{S_{k+1}}|\mcF_k)\\&=&(n-k)^{1+U_k}\mbP(I_{S_{k+1}}|\mcF_k)\\&=&(n-k)^{1+U_k}\frac1{2^{U_k}}\eeq*
where the last equation follows from the fact that for every tableau \begin{math}P\in\mcP_k\end{math} only one of its \begin{math}2^{U_k(P)}\end{math} extensions to a tableau in \begin{math}\mcP_{k+1}\end{math} has \begin{math}S_{k+1}\end{math} (see \cite{CH, HJ} for more details and further explanation). 
Combining with \eqref{1st} yields 
\[ \mbE(I_{W_{k+1}}(n-k)^{U_{k+1}}|\mcF_k)=(n-k)\left(\left(\frac{n-k+1}2\right)^{U_k}-\left(\frac{n-k}2\right)^{U_k}\right)
\]
and thus \eqref{cond_exp} equals 
\[(n-k)\mbE_{k+1}\left(I_{S_k}\left(\left(\frac{n-k+1}2\right)^{U_k}-\left(\frac{n-k}2\right)^{U_k}\right)\right).
\]
The expression inside the expectation is a random variable on \begin{math}\mcP_k\end{math} so that we can use the same argument as above (based on \cite[Equation~5]{CH} or \cite[Equation~(2.1)]{HJ}) to reduce the size by one and obtain that the expression above is 
\[\frac{n-k}{k+1}\mbE_{k}I_{S_k}\left(\left(n-k+1\right)^{U_k}-\left(n-k\right)^{U_k}\right).
\]
 Furthermore, 
on the set \begin{math}S_k\end{math}, \begin{math}U_k=U_{k-1}+1\end{math} so that the above is
\[\frac{n-k}{k+1}\mbE_{k}\left(
\left(\left(n-k+1\right)^{1+U_{k-1}}-\left(n-k\right)^{1+U_{k-1}}\right)\mbE(I_{S_k}|\mcF_{k-1})\right),
\]
which, by the same argument as above,  equals
\[\frac{n-k}{k+1}\mbE_{k}\left(
\left(\left(n-k+1\right)^{1+U_{k-1}}-\left(n-k\right)^{1+U_{k-1}}\right)\frac1{2^{U_{k-1}}}\right).
\]
After reducing the size one more time we obtain
\begin{equation}
\label{MP}\frac{n-k}{(k+1)k}\left(\mbE_{k-1}
\left(n-k+1\right)^{1+U_{k-1}}-\mbE_{k-1}\left(n-k\right)^{1+U_{k-1}}\right).
\end{equation}
As computed in \cite[Equation~(2.4)]{HJ} for a positive integer \begin{math}m\end{math} the generating function of \begin{math}U_m\end{math} is given by
\[
\mbE_m z^{U_m}=\frac{\Gamma(z+m)}{\Gamma(z)m!}.
\]
(There is an obvious omission in (2.4) there; the \begin{math}z+n\end{math} in the third expression should be \begin{math}z+n-1\end{math}.)
Using this with \begin{math}m=k-1\end{math} and \begin{math}z=n-k+1\end{math} and then with \begin{math}z=n-k\end{math} we obtain
\begin{equation}\label{FP}
\mbE_{k-1}
\left(\left(n-k+1\right)^{1+U_{k-1}}\right)=(n-k+1)\frac{(n-1)!}{(n-k)!(k-1)!}
\end{equation}
and
\be
\mbE_{k-1}\left(\left(n-k\right)^{1+U_{k-1}}\right)=(n-k)
\frac{(n-2)!}{(n-k-1)!(k-1)!}.
\label{SP}
\ee
Combining Equations (\ref{FC}), (\ref{MP}), (\ref{FP}), and (\ref{SP}),
\beq*
&&\mbE_n\left(I_{S_k,W_{k+1}}\right)=\\&&\quad\frac{(n-k-1)!(k+1)!}{n!}\cdot\frac{n-k}{k(k+1)}\left(\frac{(n-k+1)(n-1)!}{(k-1)!(n-k)!}-\frac{(n-k)(n-2)!}{(k-1)!(n-k-1)!}\right)\\&&\quad=
\frac{n-k+1}{n}-\frac{(n-k)^2}{n(n-1)},
\eeq*
and the conclusion follows.
\end{proof}

The relationship between permutation tableaux and tree--like tableaux given by \eqref{corners} allows us to deduce the following corollary to Theorem~\ref{1corner}.
\begin{corollary}\label{specificcornerT}
For tree--like tableaux of size \begin{math}n\end{math}, \begin{math}n\geq 2\end{math}, the probability of having a corner with border edges \begin{math}k\end{math} and \begin{math}k+1\end{math} is given by
\[\mbP_n\left(I_{S_k,W_{k+1}}\right)=
\begin{cases}
\frac{n-k+1}{n}-\frac{(n-k)^2}{n(n-1)}&k=1,\dots,n-1;\\
\frac1n &k=n.
\end{cases}
\]
\end{corollary}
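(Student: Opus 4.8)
The idea is to push the per--edge formula of Theorem~\ref{onecorner} through the bijection between $\mcT_n$ and $\mcP_n$ that sends a tree--like tableau $F$ to a permutation tableau $F'$, keeping track of the individual border edges rather than only of the total number of corners. The first, and really the only delicate, step is to record exactly what this bijection does at the level of the border word. Since $F'$ is obtained from $F$ by deleting the SW--most edge of $F$ together with the cells of its leftmost column, and since the SW--most edge of $F$ is always a west step (the bottom row of a tree--like tableau is nonempty, see Definition~\ref{T}), the border path of $F'$ is precisely the border path of $F$ with its final step removed. Hence, writing $e_1,\dots,e_{n+1}$ for the sequence of border steps of $F\in\mcT_n$ (so that $e_{n+1}=W$) and $f_1,\dots,f_n$ for that of $F'\in\mcP_n$, we have $f_i=e_i$ for every $1\le i\le n$. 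This is the sharper form of the corner--count remark made just before Theorem~\ref{onecorner}: erasing the final $W$ can destroy a corner only on edges $n$ and $n+1$ and leaves all other corners exactly where they were.

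For $1\le k\le n-1$ the indicator $I_{S_k,W_{k+1}}$ depends only on the steps numbered $k$ and $k+1$, both of which lie among $e_1,\dots,e_n$, and therefore takes the same value on $F$ and on $F'$. Since the bijection identifies the uniform probability measure on $\mcT_n$ with the uniform probability measure on $\mcP_n$, the probability of a corner on edges $k$ and $k+1$ in a random tree--like tableau of size $n$ equals the corresponding probability for a random permutation tableau of size $n$, which by Theorem~\ref{onecorner} equals $\frac{n-k+1}{n}-\frac{(n-k)^2}{n(n-1)}$. This gives the first case.

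It remains to treat $k=n$. For every $F\in\mcT_n$ we have $e_{n+1}=W$, so on $\mcT_n$ the indicator $I_{S_n,W_{n+1}}$ is equal to $I_{S_n}$; and since $e_n=f_n$, the event $S_n$ on $\mcT_n$ corresponds under the bijection to the event that the last edge of $F'$ is a south step, that is, vertical. As recalled just before Theorem~\ref{onecorner}, the permutation tableaux of size $n$ whose last edge is vertical are precisely the unique extensions of the permutation tableaux of size $n-1$, hence there are $(n-1)!$ of them; since $|\mcP_n|=n!$, the probability in question is $(n-1)!/n!=1/n$, as claimed.

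In short, once one sees that the bijection merely erases the last (west) step of the border word — so that corner positions $1,\dots,n-1$ transfer verbatim while the extra position $n$ available to a tree--like tableau corresponds to the ``last edge vertical'' event in the associated permutation tableau — the corollary drops out of Theorem~\ref{onecorner} together with the elementary count of size--$n$ permutation tableaux with a vertical last edge. The only point requiring care is this transfer of edge indices across the bijection; no computation remains.
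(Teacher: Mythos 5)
Your proposal is correct and follows the route the paper intends: the corollary is deduced from Theorem~\ref{onecorner} via the bijection of \cite[Proposition~1.3]{ABN}, with the first $n$ border edges transferring verbatim (so the case $k\le n-1$ is immediate) and the case $k=n$ reducing to the count of $(n-1)!$ permutation tableaux of size $n$ with a vertical last edge, exactly as in the discussion preceding \eqref{corners}. Your explicit tracking of the border word under the bijection is precisely the detail the paper leaves implicit; no issues.
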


Finally, we establish the following result which, when combined with \eqref{corners} and \eqref{expP}, completes the proof of Theorem~\ref{thm:conj}.
\begin{theorem}\label{thm:exp-c}
For  permutation tableaux of size \begin{math}n\end{math} we have
\[\mbE_n C_n=\frac{n+4}6-\frac1n.\]
\end{theorem}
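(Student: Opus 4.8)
The plan is to combine the indicator representation \eqref{eq:corners} of $C_n$ with the per-corner probabilities supplied by Theorem~\ref{onecorner}, and then evaluate the resulting finite sum by hand. Since $C_n=\sum_{k=1}^{n-1}I_{S_k,W_{k+1}}$, linearity of expectation gives
\[
\mbE_n C_n=\sum_{k=1}^{n-1}\mbP_n\!\left(I_{S_k,W_{k+1}}\right)=\sum_{k=1}^{n-1}\left(\frac{n-k+1}{n}-\frac{(n-k)^2}{n(n-1)}\right),
\]
the second equality being exactly Theorem~\ref{onecorner}, which applies uniformly for every $k$ with $1\le k\le n-1$.

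Next I would reindex by $j=n-k$, so that the sum over $k=1,\dots,n-1$ becomes a sum over $j=1,\dots,n-1$, and split it into two pieces:
\[
\mbE_n C_n=\frac1n\sum_{j=1}^{n-1}(j+1)-\frac1{n(n-1)}\sum_{j=1}^{n-1}j^2 .
\]
Then the classical closed forms $\sum_{j=1}^{n-1}(j+1)=\tfrac{n(n+1)}2-1$ and $\sum_{j=1}^{n-1}j^2=\tfrac{(n-1)n(2n-1)}6$ turn this into $\mbE_n C_n=\tfrac{n+1}2-\tfrac1n-\tfrac{2n-1}6$, and the elementary identity $\tfrac{n+1}2-\tfrac{2n-1}6=\tfrac{n+4}6$ yields the claimed value $\tfrac{n+4}6-\tfrac1n$.

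There is no real obstacle in this argument: all the probabilistic and combinatorial work has already been absorbed into Theorem~\ref{onecorner}, and what remains is bookkeeping. The only points meriting a moment's attention are the reindexing $j=n-k$ and the observation that, unlike in the tree--like case of Corollary~\ref{specificcornerT}, the formula of Theorem~\ref{onecorner} holds across the entire summation range, so no separate boundary term appears. One could also double--check the identity against small values of $n$ (e.g.\ $n=2,3$) to guard against an off--by--one slip in the summation limits.
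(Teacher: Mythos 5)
Your proposal is correct and follows exactly the paper's route: apply linearity of expectation to the representation \eqref{eq:corners} and sum the formula of Theorem~\ref{onecorner} over $k=1,\dots,n-1$; the paper simply leaves the summation to the reader, whereas you carry it out (correctly). No differences of substance.
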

\begin{proof}
In view of \eqref{eq:corners} we are interested in 
\beq*
\mbE_n\left(\sum_{k=1}^{n-1}I_{S_k,W_{k+1}}\right)=\sum_{k=1}^{n-1}\mbE_n\left(I_{S_k,W_{k+1}}\right).
\eeq*
Therefore, the result is obtained by summing the expression from Theorem~\ref{onecorner} from \begin{math}k=1\end{math} to \begin{math}n-1\end{math}. 
\end{proof}
To conclude this section, note that Theorem~\ref{thm:conj} could also be obtained by summing the expression from Corollary~\ref{specificcornerT} from \begin{math}k=1\end{math} to \begin{math}n\end{math}.

\section{Corners in Symmetric Tree-Like Tableaux}\label{sec:symtab}
The main result of this section concerns the second conjecture of Laborde Zubieta. 
\begin{theorem} (see \cite[Conjecture~4.2]{LZ}) \label{thm:conjB} For \begin{math}n\ge2\end{math} we have
\[c(\mcT^{sym}_{2n+1})=2^n\times n!\times\frac{4n+13}{12}.\]
\end{theorem}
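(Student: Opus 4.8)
The plan is to mirror the structure of Section~\ref{seccorners}, replacing permutation tableaux by type--B permutation tableaux. First I would establish the analogue of~\eqref{corners}: using the bijection of \cite{ABN} between symmetric tree--like tableaux of size $2n+1$ and type--B permutation tableaux of size $n$ (which, as in the asymmetric case, removes the SW--most border edge and the left--most column), express $c(\mcT^{sym}_{2n+1})$ in terms of $c(\mcB_n)$ plus a correction term counting those $T\in\mcB_n$ whose last border edge is a south step. As in the remark following~\eqref{corners}, one identifies such tableaux as unique extensions of tableaux of size $n-1$, so the correction term should be $2^{n-1}(n-1)!$ (up to a constant factor coming from the type--B extension rules, which must be checked against \cite{CK}). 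Then, since $|\mcB_n|=2^nn!$, Equation~\eqref{exp} gives $c(\mcB_n)=2^nn!\,\mbE_nC_n$, and the whole problem reduces to computing $\mbE_nC_n$ for type--B permutation tableaux.

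Second, I would prove the type--B analogue of Theorem~\ref{onecorner}, i.e.\ a closed formula for $\mbP_n(I_{S_k,W_{k+1}})$ on $\mcB_n$. The engine here is again a relationship between the measures $\mbP_n$ on $\mcB_n$ and $\mbP_{n-1}$ on $\mcB_{n-1}$ analogous to~\eqref{relmeas}; Section~\ref{sec:symtab} (whose tools the excerpt says are developed there) presumably supplies the correct recursive construction of type--B permutation tableaux and the identity $\mbE_nX_{n-1}=\frac1{2n}\mbE_{n-1}(2^{U_{n-1}}X_{n-1})$ or similar, together with the conditional law of $U_n$ given $U_{n-1}$ and the generating function $\mbE_mz^{U_m}$ for type--B tableaux. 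With those ingredients in hand, the derivation of $\mbP_n(I_{S_k,W_{k+1}})$ proceeds exactly as in the proof of Theorem~\ref{onecorner}: reduce the size step by step, using complementarity of $I_{W_{k+1}}$ and $I_{S_{k+1}}$, the fact that $U$ increases by one on a south step, and the count of extensions that realize $S_{k+1}$, then finish with the generating function for $U_m$.

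Third, I would sum the resulting expression over $k$ (over the appropriate range, with a separate treatment of the boundary term $k=n$ exactly as in Corollary~\ref{specificcornerT}) to get a closed form for $\mbE_nC_n$ on $\mcB_n$; this is the type--B counterpart of Theorem~\ref{thm:exp-c}. Finally, plugging that into $c(\mcB_n)=2^nn!\,\mbE_nC_n$ and then into the type--B version of~\eqref{corners} yields $c(\mcT^{sym}_{2n+1})$, which I would check equals $2^n n!\,\frac{4n+13}{12}$, completing the proof.

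The main obstacle is not the summation (routine) but getting the type--B recursive machinery exactly right: the extension rules for type--B permutation tableaux differ from the ordinary ones because of the diagonal cells and the diagonal--$0$ condition~\ref{B3}, so the measure--change identity, the conditional distribution of $U_n$, and the generating function of $U_m$ all acquire modified constants, and the bookkeeping of which extensions produce a south last step (hence the exact correction term in the analogue of~\eqref{corners}) must be done carefully. Once Section~\ref{sec:symtab} pins down those combinatorial facts, the rest is a faithful transcription of the argument already given for $\mcP_n$.
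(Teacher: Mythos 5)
Your overall strategy---transport the problem to type--B permutation tableaux, redo the measure--change/conditional--expectation machinery there, compute $\mbP_n(I_{S_k,W_{k+1}})$ on $\mcB_n$, and sum---is exactly the paper's plan, and your anticipation that the type--B recursion yields the same conditional law $1+\Bin(U_{n-1})$ with only constants changing is borne out (the measure change is in fact identical to the type--A one, $\mbE_nX=\frac1n\mbE_{n-1}(2^{U_{n-1}}X)$, while the single--corner probability picks up factors of $2$: it is $\frac{n-k+1}{2n}-\frac{(n-k)^2}{4n(n-1)}$).

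The genuine gap is in your analogue of~\eqref{corners}. The bijection between $\mcT^{sym}_{2n+1}$ and $\mcB_n$ is not ``remove the SW--most edge and the left--most column'': it removes the left--most column \emph{and all cells above the diagonal}, so the type--B tableau records only (roughly) half of the boundary of the symmetric tableau. Consequently every corner of $B\in\mcB_n$ off the diagonal corresponds to \emph{two} corners of the symmetric tableau (one in each half), and there is an additional possible corner straddling the diagonal, governed by whether the first step of $B$ is west. The correct relation is
\[
c(\mcT^{sym}_{2n+1})=2c(\mcB_n)+2|\{B\in\mcB_n:\ S_n\}|+|\{B\in\mcB_n:\ W_1\}|
=2c(\mcB_n)+2^n(n-1)!+2^{n-1}n!,
\]
whereas your proposed $c(\mcB_n)+2^{n-1}(n-1)!$ (even allowing your hedged ``constant factor'') has the wrong structure: it misses the doubling of $c(\mcB_n)$ and omits the diagonal--corner term $|\{B:W_1\}|=2^{n-1}n!$ entirely. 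Plugging the correct $\mbE_nC_n=\frac{4n+7}{24}-\frac1{2n}$ into your relation gives $2^nn!\cdot\frac{4n+7}{24}$ rather than $2^nn!\cdot\frac{4n+13}{12}$, so the error is not absorbed by any constant you could recover later; the symmetry bookkeeping has to be done as in the paper's Lemma~\ref{lemmaCSTT} before the rest of your (otherwise faithful) transcription of the Section~\ref{seccorners} argument goes through.
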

As in Section~\ref{seccorners}, we will use a bijection between symmetric tree--like tableaux and type--B permutation tableaux to relate the corners of  \begin{math}\mcT^{sym}_{2n+1}\end{math} to the corners of \begin{math}\mcB_n\end{math}. In Section~2.2 of \cite{ABN}, it was mentioned that there exists such a bijection; however, no details were given. Thus, we give a description of one such bijection which will be useful to us (see Figure~\ref{TBPB}).  

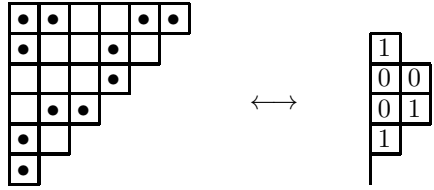
\begin{figure}[htbp] 
\setlength{\unitlength}{0.4cm}
\begin{center}
\begin{picture}
(0,0)(7,6)\thicklines

\put(12,0){\line(0,1){5}}
\put(13,1){\line(0,1){4}}
\put(14,2){\line(0,1){2}}

\put(12,1){\line(1,0){1}}
\put(12,2){\line(1,0){2}}
\put(12,3){\line(1,0){2}}
\put(12,4){\line(1,0){2}}
\put(12,5){\line(1,0){1}}

\put(12.25, 1.25){\begin{math}1\end{math}}
\put(12.25, 4.25){\begin{math}1\end{math}}
\put(13.25, 2.25){\begin{math}1\end{math}}

\put(12.25, 2.25){\begin{math}0\end{math}}
\put(12.25, 3.25){\begin{math}0\end{math}}
\put(13.25, 3.25){\begin{math}0\end{math}}

\put(0,0){\line(0,1){6}}
\put(1,0){\line(0,1){6}}
\put(2,1){\line(0,1){5}}
\put(3,2){\line(0,1){4}}
\put(4,3){\line(0,1){3}}
\put(5,4){\line(0,1){2}}
\put(6,5){\line(0,1){1}}

\put(0,6){\line(1,0){6}}
\put(0,5){\line(1,0){6}}
\put(0,4){\line(1,0){5}}
\put(0,3){\line(1,0){4}}
\put(0,2){\line(1,0){3}}
\put(0,1){\line(1,0){2}}
\put(0,0){\line(1,0){1}}

\put(8,2.5){\begin{math}\longleftrightarrow\end{math}}

\put(0.25, 0.25){\begin{math}\bullet\end{math}}
\put(0.25, 1.25){\begin{math}\bullet\end{math}}
\put(0.25, 4.25){\begin{math}\bullet\end{math}}
\put(0.25, 5.25){\begin{math}\bullet\end{math}}
\put(1.25, 2.25){\begin{math}\bullet\end{math}}
\put(1.25, 5.25){\begin{math}\bullet\end{math}}
\put(2.25, 2.25){\begin{math}\bullet\end{math}}
\put(3.25, 3.25){\begin{math}\bullet\end{math}}
\put(3.25, 4.25){\begin{math}\bullet\end{math}}
\put(4.25, 5.25){\begin{math}\bullet\end{math}}
\put(5.25, 5.25){\begin{math}\bullet\end{math}}

\end{picture}

\vspace{3cm}
\caption{An example of the bijection $F$as defined in Lemma~\ref{Bijection} between type--B permutation tableaux of size $5$ and symmetric tree--like tableaux of size $11$.}\label{TBPB}
\end{center}

\end{figure}
\begin{lemma}\label{Bijection} 
Consider \begin{math}F: \mathcal{T}_{2n+1}^{sym}\rightarrow\mathcal{B}_n\end{math} defined by the following rules,
\begin{enumerate}
\item Replace the topmost point in each column with \begin{math}1_{T}\end{math}'s. \label{TR1}
\item Replace the leftmost points in each row with \begin{math}0_{R}\end{math}'s \label{TR2}
\item Fill in the remaining cells according to the rules of type--B permutation tableaux.\label{TR3}
\item Remove the cells above the diagonal.\label{TR4}
\item Remove the first column.\label{TR5}
\end{enumerate}
and \begin{math}F^{-1}: \mathcal{B}_n\rightarrow\mathcal{T}^{sym}_{2n+1}\end{math} defined by:
\begin{enumerate}
\item Add a column and point all cells except those in a restricted row. \label{BR1}
\item Replace all \begin{math}0_{R}\end{math}'s with points unless 
that \begin{math}0_{R}\end{math} is in the same row as a diagonal \begin{math}0\end{math}.\label{BR2}
\item Replace all non-diagonal \begin{math}1_{T}\end{math}'s with points.\label{BR3}
\item Delete the remaining numbers, 
add a pointed box in the upper--left--hand corner (the root point), and then add the boxes necessary to make the tableau symmetric.\label{BR4}
\end{enumerate}
Then \begin{math}F\end{math} is a bijection between \begin{math}\mathcal{T}^{sym}_{2n+1}\end{math} and \begin{math}\mathcal{B}_n\end{math}.
\end{lemma}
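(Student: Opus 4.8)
The plan is to prove the statement by establishing three things separately: that $F$ maps $\mcT^{sym}_{2n+1}$ into $\mcB_n$, that $F^{-1}$ maps $\mcB_n$ into $\mcT^{sym}_{2n+1}$, and that $F^{-1}\circ F$ is the identity on $\mcT^{sym}_{2n+1}$. Since it is recalled above that $|\mcT^{sym}_{2n+1}|=2^nn!=|\mcB_n|$, a well--defined map $F$ between these two finite sets that admits a left inverse is automatically a bijection, and the explicitly described $F^{-1}$ is then forced to be its two--sided inverse; alternatively one may verify $F\circ F^{-1}=\mathrm{id}$ directly by an entirely parallel argument.

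The first point to settle is why the recipe defining $F$ is unambiguous. By rule~\ref{T3} of Definition~\ref{T}, every pointed cell of a tree--like tableau is the topmost point in its column or the leftmost point in its row; the root is the unique cell forced to be both, and, because $T$ is symmetric, any pointed cell lying on the main diagonal is simultaneously topmost in its column and leftmost in its row (indeed a point to the left of it would, by symmetry, give a point above it, and conversely). Hence after Steps~\ref{TR1}--\ref{TR2} every off--diagonal point has acquired exactly one of the symbols $1_T$, $0_R$, and each diagonal point has become the corresponding diagonal entry. Step~\ref{TR3} is then legitimate because a type--B permutation tableau is uniquely recovered from the positions of its $1_T$'s and $0_R$'s together with its diagonal entries, via the usual left--to--right column sweep underlying the recursive description of (type--B) permutation tableaux in \cite{CH}; if no direct reference is available this should be isolated as a short lemma. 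That the outcome lies in $\mcB_n$ is then checked rule by rule: the shape bookkeeping of Steps~\ref{TR4}--\ref{TR5} turns a self--conjugate Ferrers diagram of half--perimeter $2n+2$ into a shifted Ferrers diagram of half--perimeter $n$; rule~\ref{B1} holds since every column of a tree--like tableau has a point, which becomes a $1_T$; rule~\ref{B2} is a transcription of rule~\ref{T3} read through the symmetry of $T$; and rule~\ref{B3} holds because a diagonal $0$ comes from an unpointed diagonal cell, and rules~\ref{T2} and symmetry then force the entire row issuing from that cell to be empty, hence all $0$'s.

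For $F^{-1}$ one reverses the shape operations — adjoining the first column and the above--diagonal cells recovers uniquely a self--conjugate Ferrers diagram of half--perimeter $2n+2$ — and checks that Steps~\ref{BR1}--\ref{BR4} place a set of points that is symmetric about the diagonal and satisfies Definition~\ref{T}: rule~\ref{T1} holds because the root is inserted in Step~\ref{BR4}; rule~\ref{T2} holds because every column of $B\in\mcB_n$ has a $1$, so its topmost $1$ becomes a point via Step~\ref{BR3}, every non--restricted row gets a point from the adjoined column, and a restricted row gets a point from its rightmost restricted $0$ unless it is a diagonal--$0$ row, whose coverage then comes from the adjoined column together with symmetry; and rule~\ref{T3} holds because, by rule~\ref{B2} and the reconstruction, no recovered point has a point both above it and to its left. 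Finally, tracing $T\in\mcT^{sym}_{2n+1}$ through $F$ and then $F^{-1}$: the $1_T$'s of $F(T)$ sit exactly at the topmost points of the columns of $T$ and the $0_R$'s exactly at the leftmost points of its rows, so Steps~\ref{BR1}--\ref{BR3} reinstate precisely those points, Step~\ref{BR4} reinstates the root and the above--diagonal points dictated by symmetry, no spurious point is created, and the two shape manipulations cancel; hence $F^{-1}\circ F=\mathrm{id}$.

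The main obstacle will be the diagonal bookkeeping: pinning down exactly which cells of a symmetric tree--like tableau become the diagonal cells, the diagonal $0$'s and the diagonal $1_T$'s of the associated type--B tableau, and verifying that the instruction in Step~\ref{BR4} to ``make the tableau symmetric'' adjoins precisely the above--diagonal points consistent with — rather than in conflict with — the points already forced by Steps~\ref{BR1}--\ref{BR3}. The secondary technical point, worth stating explicitly if it cannot be cited, is the unique--reconstruction fact invoked to legitimize Step~\ref{TR3}.
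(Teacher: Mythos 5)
You should know at the outset that the paper does not actually prove this lemma: its ``proof'' consists of the single sentence that the details are straightforward and deferred to the full version \cite{HL}. So there is no argument in the paper to compare your route against, and your proposal has to be judged on its own. Its architecture is sound: showing that $F$ lands in $\mcB_n$, that the displayed $F^{-1}$ is a left inverse, and then invoking $|\mcT^{sym}_{2n+1}|=2^nn!=|\mcB_n|$ does yield bijectivity. But note that your submission is itself a plan rather than a proof --- the two items you flag at the end (the diagonal bookkeeping and the unique--reconstruction lemma legitimizing Step~\ref{TR3}) are precisely where the content of the lemma lives, and they are left undone.

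There is also one concrete false assertion. You claim that after Steps~\ref{TR1}--\ref{TR2} every off--diagonal point has acquired \emph{exactly} one of the symbols $1_T$, $0_R$, on the grounds of rule~\ref{T3} of Definition~\ref{T}. That rule only gives \emph{at least} one: it is an ``or,'' not an ``exclusive or.'' A counterexample is the symmetric tree--like tableau of size $5$ on the $3\times3$ square with points at $(1,1)$, $(2,3)$, $(3,2)$; the off--diagonal point $(3,2)$ is simultaneously the topmost point of its column and the leftmost point of its row, so Steps~\ref{TR1} and~\ref{TR2} compete for it. Your parenthetical symmetry argument (``a point to the left of it would, by symmetry, give a point above it'') only relates a cell to its mirror image and does not rule this out. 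To repair this you must fix a precedence --- read the enumeration as sequential, so Step~\ref{TR2} applies only to points not already converted to $1_T$ --- and then re--examine the two places this choice matters: the verification that the resulting filling satisfies rules~\ref{B1}--\ref{B3} of Definition~\ref{B} (in particular that such a doubly--extremal cell, recorded as a $1_T$, does not create a forbidden pattern~\ref{B2} or starve a row of its $0_R$), and the verification that $F^{-1}\circ F=\mathrm{id}$ (which is unaffected, since Steps~\ref{BR2}--\ref{BR3} turn both symbols back into points). Without this the map $F$ is not even well defined, so the gap is not cosmetic.
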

\begin{proof}
The details of this proof are straightforward and will be given in the full version of this paper \cite{HL}.
\end{proof}

As mentioned earlier, Lemma~\ref{Bijection} will allow us to relate the corners of symmetric tree--like tableaux to the corners of type--B permutation tableaux.  To carry out the calculations for type--B permutation tableaux we will develop techniques similar to those developed in \cite{CH} for permutation tableaux. First, we briefly describe an extension procedure for \begin{math}B\end{math}--type tableaux that mimics a construction given in \cite[Section~2]{CH}. Fix any \begin{math}B\in\mcB_{n-1}\end{math} and let  \begin{math}U_{n-1}=U_{n-1}(B)\end{math}  be the number of unrestricted rows in \begin{math}B\end{math}. We can extend the size of \begin{math}B\end{math} to \begin{math}n\end{math} by inserting a new row or a new column. The details of this insertion will be left for the full version of this paper. However, if \begin{math}U_n\end{math} is the number of unrestricted rows in the extended tableaux, \begin{math}U_n=1,\dots,U_{n-1}+1\end{math}, the (conditional) probability  that \begin{math}U_n=U_{n-1}+1\end{math}  is given by inserting a row,
\be\label{probadd}\mbP(U_n=U_{n-1}+1|\mcF_{n-1})=\mbP(S_n|\mcF_{n-1})=\frac1{2^{U_{n-1}+1}}.\ee
(Here, analogously to permutation tableaux (see the proof  of Theorem~\ref{thm:exp-c} above or \cite[Section~2]{HJ}) \begin{math}\mcF_{n-1}\end{math} is a \begin{math}\sigma\end{math}--subalgebra on  \begin{math}\mcB_n\end{math} obtained by grouping together all tableaux in \begin{math}\mcB_n\end{math} that are obtained as the extension of the same tableau from \begin{math}\mcB_{n-1}\end{math}.) The (conditional) probability of the remaining cases is given by inserting a column,
\[
\mbP(U_n= k|\mcF_{n-1})=\frac{1}{2^{U_{n-1}+1}}\left(\binom{U_{n-1}}{k-1}+\binom{U_{n-1}}{k-1}\right)=\frac{1}{2^{U_{n-1}}}\binom{U_{n-1}}{k-1}, 
\]
for  \begin{math}k=1,\dots,U_{n-1}\end{math}. This agrees with \eqref{probadd} when \begin{math}k=U_{n+1}\end{math}. Thus, 
\[
\mathcal{L}(U_n|\mcF_{n-1})=1+\Bin(U_{n-1}),
\]
where the left--hand side means the conditional distribution of \begin{math}U_n\end{math} given \begin{math}U_{n-1}\end{math} and \begin{math}\Bin(m)\end{math} denotes a binomial random variable with parameters \begin{math}m\end{math} and \begin{math}1/2\end{math}. Note that 
this is the same relationship 
as for permutation tableaux (see \cite[Equation~(2.2)]{HJ} or \cite[Equation~4]{CH}).

 As in the case of permutation tableaux, the uniform measure \begin{math}\mbP_n\end{math} on \begin{math}\mcB_n\end{math} induces a measure (still denoted by \begin{math}\mbP_n\end{math}) on \begin{math}\mcB_{n-1}\end{math} via a mapping \begin{math}\mcB_n\to\mcB_{n-1}\end{math} that assigns to any \begin{math}B'\in \mcB_n\end{math} the unique tableau of size \begin{math}n-1\end{math} whose extension is \begin{math}B'\end{math}.
 These two measures on \begin{math}\mcB_{n-1}\end{math} are not identical, but  the relationship between them can be easily calculated (see \cite[Section~2]{CH} or \cite[Section~2]{HJ} for more details and calculations 
for  permutation tableaux).  Namely,  
\beq*
\mbP_n(B)&=&2^{U_{n-1}(B)+1}\frac{|\mcB_{n-1}|}{|\mcB_n|}\mbP_{n-1}(B),\quad B\in\mcB_{n-1}.\\
\eeq*
This relationship implies that for any random variable \begin{math}X\end{math} on \begin{math}\mcB_{n-1}\end{math},
\begin{eqnarray} 
\mbE_nX&=&\frac{2|\mcB_{n-1}|}{|\mcB_n|}\mbE_{n-1}(2^{U_{n-1}(B_{n-1})}X). \label{expBpre}
\end{eqnarray}
This allows us to provide a direct proof of the following well known fact,
\begin{proposition}\label{prop:|B_n|}
For all \begin{math}n\geq0\end{math}, \begin{math}|\mcB_n|=2^nn!\end{math}.
\end{proposition}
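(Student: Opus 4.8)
The plan is to turn the machinery just set up into a recursion for the probability generating function of $U_n$ under $\mbP_n$, iterate it all the way down to the empty tableau, and then read off $|\mcB_n|$ by evaluating at a convenient value of the variable. Concretely, I would combine the conditional law $\mathcal{L}(U_n|\mcF_{n-1})=1+\Bin(U_{n-1})$ with the change‑of‑measure identity~\eqref{expBpre}.

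First I compute, for a parameter $z$, the conditional expectation $\mbE_n(z^{U_n}\mid\mcF_{n-1})$. Since $U_n=1+\Bin(U_{n-1})$ given $\mcF_{n-1}$ and $\mbE a^{\Bin(m)}=\left(\tfrac{a+1}{2}\right)^m$, this equals $z\left(\tfrac{z+1}{2}\right)^{U_{n-1}}$. Taking expectations and applying~\eqref{expBpre} to the random variable $X=z\left(\tfrac{z+1}{2}\right)^{U_{n-1}}$ on $\mcB_{n-1}$ — here one must remember that, after conditioning, the outer expectation is taken with respect to the measure \emph{induced} on $\mcB_{n-1}$ rather than the uniform $\mbP_{n-1}$, which is exactly what~\eqref{expBpre} corrects for — the factors $2^{U_{n-1}}$ and $2^{-U_{n-1}}$ cancel, and I obtain, for every $n\ge 1$,
\[
\mbE_n z^{U_n}=\frac{2z\,|\mcB_{n-1}|}{|\mcB_n|}\,\mbE_{n-1}(z+1)^{U_{n-1}}.
\]
This is a polynomial identity in $z$, hence valid for all $z$. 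Iterating it $n$ times, each step lowering the size by one and shifting the argument up by one, the cardinalities telescope and I am left with
\[
\mbE_n z^{U_n}=\frac{2^n\,z(z+1)\cdots(z+n-1)\,|\mcB_0|}{|\mcB_n|}\,\mbE_0 z^{U_0}=\frac{2^n\,z(z+1)\cdots(z+n-1)}{|\mcB_n|},
\]
where the base case uses that $\mcB_0$ is the single empty tableau, so $|\mcB_0|=1$ and $U_0\equiv 0$. Finally I evaluate at $z=1$: the left‑hand side is $\mbE_n 1=1$, while the right‑hand side is $2^n n!/|\mcB_n|$, so $|\mcB_n|=2^n n!$, which also covers $n=0$.

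I do not expect a genuine obstacle here; the one point needing care is the bookkeeping of which measure each expectation refers to when passing from $\mcB_n$ to $\mcB_{n-1}$, which~\eqref{expBpre} encodes, and after that the telescoping is immediate. (One could instead run an induction on $n$ via~\eqref{expBpre} with $X\equiv 1$, giving $|\mcB_n|=2|\mcB_{n-1}|\,\mbE_{n-1}2^{U_{n-1}}$, but this still requires the value $\mbE_{n-1}2^{U_{n-1}}=n$, i.e.\ essentially the $z=2$ instance of the computation above, so the generating‑function route is the most economical.)
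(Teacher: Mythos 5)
Your argument is correct and is essentially the paper's own proof in a slightly more general package: the paper likewise alternates the conditional law $\mathcal{L}(U_n|\mcF_{n-1})=1+\Bin(U_{n-1})$ with the change--of--measure identity~\eqref{expBpre} and telescopes down through the sizes with the base of the exponential increasing by one at each step (it tracks $\mbE_{n-j}(j+1)^{U_{n-j}}$, i.e.\ your identity instantiated at $z=j+1$), starting from the counting identity $|\mcB_n|=\sum_{B\in\mcB_{n-1}}2^{U_{n-1}(B)+1}$, which is exactly~\eqref{expBpre} with $X\equiv1$. Your normalization at $z=1$ plays the role of the paper's base case $|\mcB_1|=2$, $U_1\equiv1$; the only (harmless) slip is that your terminal factor should read $\mbE_0(z+n)^{U_0}$ rather than $\mbE_0 z^{U_0}$, both of which equal $1$ since $U_0\equiv0$.
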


\begin{proof}
By considering all the extensions of a type--B permutation tableau of size \begin{math}n-1\end{math}, we have the following relationship,
\[
|\mcB_n|=\sum_{B\in\mcB_{n-1}}2^{U_{n-1}(B)+1}.
\]
Thus,
\beq*
|\mcB_n|&=&|\mcB_{n-1}|\mbE_{n-1}\left(2^{U_{n-1}+1}\right)\\
&=&2|\mcB_{n-1}|\mbE_{n-1}\mbE\left(2^{1+\Bin(U_{n-2})}|U_{n-2}\right)\\
&=&2\cdot2|\mcB_{n-1}|\mbE_{n-1}\left(\frac{3}{2}\right)^{U_{n-2}}\\
&=&2\cdot2|\mcB_{n-1}|\frac{2|\mcB_{n-2}|}{|\mcB_{n-1}|}\mbE_{n-2}\left(2^{U_{n-2}}\left(\frac32\right)^{U_{n-2}}\right)\\
&=&2^2\cdot2!\,|\mcB_{n-2}|\mbE_{n-2}3^{U_{n-2}}.
\eeq*
Iterating \begin{math}n\end{math} times,
\beq*
|\mcB_n|&=&2^3\cdot3!\, |\mcB_{n-3}|\mbE_{n-3}4^{U_{n-3}}
=2^{n-1}(n-1)!|\mcB_1|\mbE_1n^{U_1}\\
&=&2^nn!,
\eeq*
where the final equality holds because \begin{math}|\mcB_1|=2\end{math} and \begin{math}U_1\equiv 1\end{math}.
\end{proof}

Given Proposition~\ref{prop:|B_n|},  
 \eqref{expBpre} reads
\begin{equation}\label{expB}
\mbE_nX=\frac1n\mbE_{n-1}(2^{U_{n-1}(B_{n-1})}X).
\end{equation}
This is exactly the same expression as \cite[Equation~(7)]{CH} which means that the relationship between \begin{math}\mbE_n\end{math} and \begin{math}\mbE_{n-1}\end{math} is the same regardless of whether we are considering \begin{math}\mcP_n\end{math} or \begin{math}\mcB_n\end{math}. Thus, any computation for \begin{math}B\end{math}--type tableaux based on \eqref{expB} will lead to the same expression as the analogous computation for permutation tableaux based on \cite[Equation~(7)]{CH}.

Now we have the tools necessary to obtain a relationship between corners in symmetric tree--like tableaux and type--B permutation tableaux which is analogous to~\eqref{corners}.
\begin{lemma}\label{lemmaCSTT}
The number of corners in symmetric tree--like tableaux is given by, 
\begin{eqnarray}\label{cornersSym}c(\mcT^{sym}_{2n+1})
&=&2c(\mcB_n)+2^n(n-1)!+2^{n-1}n!.\end{eqnarray}
\end{lemma}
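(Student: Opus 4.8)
The plan is to imitate the argument that led to \eqref{corners} in the permutation tableau case, but now working through the bijection $F$ of Lemma~\ref{Bijection}. Recall that $F$ sends a symmetric tree--like tableau $T\in\mcT^{sym}_{2n+1}$ to a type--B permutation tableau $B\in\mcB_n$ by, among other operations, removing the cells above the diagonal and deleting the first column. So I first need to track precisely how the southeast border of $T$ — hence its corners — is transformed by these two operations. The key point is that the border of the symmetric tree--like tableau, read from top to bottom, is a palindrome: the portion strictly above the diagonal is the mirror image of the portion strictly below it, with the diagonal step(s) in the middle. Thus a corner of $T$ either sits below the diagonal, in which case it corresponds (after deleting the first column of $T$, which removes one west step at the very end and shifts nothing else structurally) to a corner of the type--B tableau $B=F(T)$, or it sits above the diagonal and is the mirror image of one below, or it straddles the diagonal. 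This is where the factor $2c(\mcB_n)$ will come from, together with correction terms for the diagonal region and for the first-column deletion.

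Concretely, I would argue as follows. \textbf{Step 1.} Describe the border word of $T\in\mcT^{sym}_{2n+1}$ and its image under $F$. Because the shifted Ferrers diagram obtained after step~\ref{TR3} of $F$ (before removing the cells above the diagonal) has the same half--perimeter as $B$, and because removing the first column of $B$ removes exactly the first column of the underlying Ferrers shape, the corners of $\mcT^{sym}_{2n+1}$ split according to where they lie relative to the main diagonal. \textbf{Step 2.} Corners of $T$ that lie strictly below the diagonal are in bijection, via $F$, with the corners of $B$ \emph{except possibly} for one boundary effect: deleting the leftmost column of the Ferrers shape can create or destroy a corner exactly when the last border step of $B$ is a south step, just as in \eqref{corners}. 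By symmetry each such corner of $T$ strictly below the diagonal is paired with its mirror image strictly above, contributing $2c(\mcB_n)$ up to that boundary term. \textbf{Step 3.} Count the corners of $T$ that touch the diagonal region, i.e. the ones created when the SW-border turns around near the main diagonal; using the self--conjugate structure and the presence of diagonal cells, this is a small finite local analysis that produces the terms $2^n(n-1)!$ and $2^{n-1}n!$. The first of these should count, over all $B\in\mcB_n$, the tableaux whose last border step is south — there are $2^n(n-1)!$ of them by the same extension argument as in the excerpt (the unique extension of each of the $|\mcB_{n-1}|=2^{n-1}(n-1)!$ tableaux, doubled by the type--B structure) — and the second should count the corner that always appears (or always appears with a specific multiplicity) along the diagonal, weighted by $|\mcB_n|=2^n n!$ via \eqref{expB}, divided by the appropriate factor.

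\textbf{Step 4.} Assemble the pieces: $c(\mcT^{sym}_{2n+1}) = 2c(\mcB_n) + (\text{boundary/first-column term}) + (\text{diagonal term}) = 2c(\mcB_n)+2^n(n-1)!+2^{n-1}n!$.

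The main obstacle I anticipate is \textbf{Step 3}: correctly bookkeeping the corners in the neighbourhood of the main diagonal. One must be careful that a south step immediately followed by a west step across the diagonal is not double--counted once we pair below-diagonal corners with their mirror images, and that the diagonal $0$'s of the type--B tableau (rule~\ref{B3}) and the root point of the tree--like tableau (rule~\ref{TR1}/\ref{BR4}) are handled consistently by $F$ and $F^{-1}$. Establishing the two clean combinatorial identities — that the number of $B\in\mcB_n$ with $S_n$ is $2^n(n-1)!$, and that the diagonal contributes exactly $2^{n-1}n!$ corners in total — will likely require the extension/restriction machinery around \eqref{probadd} and \eqref{expB} rather than a purely bijective count, exactly as $(n-1)!$ arose in \eqref{corners}. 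Everything else is the routine transfer of corner counts across a shape-preserving bijection, and I would relegate the verification details (as the authors do for Lemma~\ref{Bijection} itself) to the full version if space is tight.
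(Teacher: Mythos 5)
Your overall plan follows the same route as the paper: the paper's proof consists of recording the intermediate identity \eqref{rela},
\[
c(\mcT^{sym}_{2n+1})=2c(\mcB_n)+2\,|\{B\in\mcB_n:\ S_n\}|+|\{B\in\mcB_n:\ W_1\}|,
\]
that is, mirror--paired off--diagonal corners give $2c(\mcB_n)$, the boundary corner created near the removed first column (together with its mirror image) is governed by whether the last border step of $B$ is south, and the single possible corner straddling the main diagonal is governed by whether the first border step of $B$ is west; the two cardinalities are then evaluated by the extension process. Your Steps 1, 2 and 4 are consistent with this.

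The genuine gap is in your Step 3, which you yourself identify as the obstacle, and your proposed resolution of it is not correct as stated. First, you assert that $|\{B\in\mcB_n:\ S_n\}|=2^n(n-1)!$, ``doubled by the type--B structure.'' The extension process around \eqref{probadd} shows that each $B\in\mcB_{n-1}$ has $2^{U_{n-1}(B)+1}$ extensions of which exactly one has $S_n$, so in fact $|\{B\in\mcB_n:\ S_n\}|=|\mcB_{n-1}|=2^{n-1}(n-1)!$; the remaining factor of $2$ in the term $2^n(n-1)!$ comes not from the type--B structure but from the diagonal symmetry of $T$ (each such boundary corner below the diagonal has a distinct mirror image above it), and these boundary corners are precisely the ones your Step 2 pairing excludes, so as written you would miscount by a factor of $2$. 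Second, your description of the $2^{n-1}n!$ term (``weighted by $|\mcB_n|$ \dots divided by the appropriate factor'') is too vague to verify. The clean statement is this: the border word $w_1\cdots w_{2n+2}$ of $T\in\mcT^{sym}_{2n+1}$ satisfies $w_i=\overline{w_{2n+3-i}}$ (reflection reverses the order of the border edges and exchanges south with west), so exactly one of $w_{n+1},w_{n+2}$ is south and the diagonal corner exists if and only if $w_{n+1}$ is south, which under $F$ corresponds to the event $W_1$ for $B$; one then still needs the separate count $|\{B\in\mcB_n:\ W_1\}|=2^{n-1}n!$, i.e.\ that exactly half of $\mcB_n$ has first step west (consistent with the value $1/2$ at $k=n+1$ in Corollary~\ref{specificcornerST}). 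Until these two cardinalities are pinned down, the lemma is not proved; the paper also defers these verifications to the full version, but it isolates the correct quantities in \eqref{rela}, which your proposal does not quite do.
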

\begin{proof}
The bijection described in Lemma~\ref{Bijection} leads to the following relationship,
\be \label{rela}
c(\mcT^{sym}_{2n+1})=2c(\mcB_n)+2|\{B\in\mcB_n:\ S_n\}|+|\{B\in\mcB_n:\ W_1\}|.
\ee
The result is then obtained by the extension process described above. The details will be given in the full version of this paper \cite{HL}.
\end{proof}

It follows from Lemma~\ref{lemmaCSTT} that to prove Theorem~\ref{thm:conjB}, it suffices to determine the number of corners in type--B permutation tableaux of size \begin{math}n\end{math}. Since \begin{math}|\mcB_n|=2^nn!\end{math}, Equation~\eqref{exp} becomes 
\be \label{expCorns} c(\mcB_n)=2^nn!\, \mbE_nC_n.\ee
In order to determine the number of corners in type--B permutation tableaux, we first have the following result.

\begin{theorem}\label{1cornerB}
For type--B permutation tableaux of size \begin{math}n\end{math}, the probability of having a  corner with border edges \begin{math}k\end{math} and \begin{math}k+1\end{math} is given by
\beq*
\mbP_n\left(I_{M_k=S,M_{k+1}=W}\right)&=&
\frac{n-k+1}{2n}-\frac{(n-k)^2}{4n(n-1)}.
\eeq*
\end{theorem}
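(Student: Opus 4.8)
The plan is to rerun the argument of Theorem~\ref{onecorner} essentially verbatim, exploiting the fact, established above, that the size--reduction identity \eqref{expB} for $\mcB_n$ is literally the same as \cite[Equation~(7)]{CH} for $\mcP_n$, and that $\mathcal L(U_n|\mcF_{n-1})=1+\Bin(U_{n-1})$ holds on $\mcB_n$ as well. Consequently the chain of reductions leading to \eqref{binreduction} and then to \eqref{FC} — which uses only \eqref{expB} and the binomial structure of $U_n$ — carries over without change, so the task reduces to evaluating $\mbE_{k+1}\big(I_{S_k,W_{k+1}}(n-k)^{U_{k+1}}\big)$ over $\mcB_{k+1}$, exactly the quantity \eqref{cond_exp}. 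One also needs the generating function $\mbE_m z^{U_m}=\Gamma(z+m)/(\Gamma(z)\,m!)$ for $\mcB_m$; this is the same as for $\mcP_m$ because $U_m$ has the same distribution under the uniform measures on $\mcB_m$ and on $\mcP_m$ (both satisfy $\mathcal L(U_m|\mcF_{m-1})=1+\Bin(U_{m-1})$ with $U_0\equiv 0$, and $|\mcB_m|=2^m|\mcP_m|$), and it is in any case implicit in the computation in the proof of Proposition~\ref{prop:|B_n|}.

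The only genuine departure from the permutation--tableau case is in the steps where one conditions on whether a given step is south. For $\mcP_k$ exactly one of the $2^{U_k}$ extensions to size $k+1$ realizes $S_{k+1}$, so $\mbP(S_{k+1}|\mcF_k)=2^{-U_k}$; for $\mcB_k$, by \eqref{probadd}, $\mbP(S_{k+1}|\mcF_k)=2^{-(U_k+1)}$, an extra factor $\frac12$. This substitution enters the computation of \eqref{cond_exp} in two places. First, when $I_{W_{k+1}}=1-I_{S_{k+1}}$ is split: there $\mbE\big(I_{S_{k+1}}(n-k)^{U_{k+1}}\,|\,\mcF_k\big)$ becomes $\frac{n-k}{2}\big(\frac{n-k}{2}\big)^{U_k}$ instead of $(n-k)\big(\frac{n-k}{2}\big)^{U_k}$, which ultimately attaches an extra factor $\frac12$ to the summand built from \eqref{SP}. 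Second, in the later step where $\mbE(I_{S_k}|\mcF_{k-1})$ is evaluated; after the ensuing size reduction via \eqref{expB} this leaves a bare $\frac12$ multiplying the whole expression (in place of the $1$ obtained in \eqref{MP}). Carrying both factors through and then substituting \eqref{FP} and \eqref{SP} exactly as in Theorem~\ref{onecorner}, the net effect, relative to that theorem, is an overall factor $\frac12$ together with one additional factor $\frac12$ on the $\eqref{SP}$-summand.

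Hence, for $1\le k\le n-1$,
\[
\mbP_n\big(I_{S_k,W_{k+1}}\big)=\frac12\left(\frac{n-k+1}{n}-\frac12\cdot\frac{(n-k)^2}{n(n-1)}\right)=\frac{n-k+1}{2n}-\frac{(n-k)^2}{4n(n-1)},
\]
which is the claimed formula; the boundary case $k+1=n$ (the term $\mbE_n I_{S_{n-1},W_n}$ not covered by the restriction $k+1\le n-1$) is handled exactly as in Theorem~\ref{onecorner}, since \eqref{FC} is then an empty product. The point demanding the most care is the bookkeeping of these two factors of $\frac12$ — making sure precisely one of them reaches the $\eqref{SP}$-summand and the other is global — together with verifying that \eqref{expB}, the binomial law for $U_n$, and the generating function of $U_n$ all remain valid for $\mcB_n$ over the full range of sizes used; all three are supplied by Section~\ref{sec:symtab}.
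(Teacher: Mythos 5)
Your proposal is correct and follows exactly the route the paper intends: the paper's own ``proof'' of Theorem~\ref{1cornerB} is a one-line deferral to the full version, stating only that the argument mirrors Theorem~\ref{onecorner} using the type--B machinery of Section~\ref{sec:symtab}, which is precisely what you carry out. Your bookkeeping of the two factors of $\frac12$ coming from $\mbP(S_{k+1}|\mcF_k)=2^{-(U_k+1)}$ (one global, one attached to the \eqref{SP}-summand) is the right accounting and reproduces the stated formula.
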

\begin{proof}
The proof is similar to the proof of Theorem~\ref{onecorner}, using the techniques developed in this section for type--B permutation tableaux. The details will be given in the full version of this paper \cite{HL}.
\end{proof}

The relationship between permutation tableaux and tree--like tableaux given by \eqref{cornersSym} allows us to deduce the following corollary to Theorem~\ref{1cornerB}.

\begin{corollary}\label{specificcornerST}
For symmetric tree--like tableaux of size \begin{math}2n+1\end{math}, \begin{math}n\geq 2\end{math}, the probability of having a corner with border edges \begin{math}k\end{math} and \begin{math}k+1\end{math} is given by
\[\mbP_n\left(I_{S_k,W_{k+1}}\right)=
\begin{cases}
\frac{1}{2n}&k=1\\
\frac{k}{2n}-\frac{(k-1)^2}{4n(n-1)}&k=2,\dots n,\\
\frac{1}{2}&k=n+1\\
\frac{2n-k+2}{2n}-\frac{(2n-k+1)^2}{4n(n-1)}&k=n+2,\dots 2n\\
\frac{1}{2n} &k=2n+1.
\end{cases}
\]

\end{corollary}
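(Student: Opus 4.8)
\emph{Proof proposal.} The plan parallels the passage from Theorem~\ref{onecorner} to Corollary~\ref{specificcornerT}: I would transport each corner indicator on $\mcT^{sym}_{2n+1}$ to $\mcB_n$ via the bijection $F$ of Lemma~\ref{Bijection}, use Theorem~\ref{1cornerB} for the corner positions that survive $F$, and read off the three remaining positions $k\in\{1,n+1,2n+1\}$ from the boundary counts already isolated in Lemma~\ref{lemmaCSTT}. The essential first ingredient is the reflection symmetry: a tableau $T\in\mcT^{sym}_{2n+1}$ is invariant under reflection in the main diagonal, and this reflection reverses the order of the $2n+2$ border edges while interchanging south and west steps; hence, on $\mcT^{sym}_{2n+1}$, the event $S_k$ holds exactly when $W_{2n+3-k}$ holds, so that a corner occupies position $k$ if and only if one occupies position $2n+2-k$, i.e.
\[
\mbP_n\!\left(I_{S_k,W_{k+1}}\right)=\mbP_n\!\left(I_{S_{2n+2-k},\,W_{2n+3-k}}\right),\qquad 1\le k\le 2n+1 .
\]
This identity fixes $k=n+1$ and pairs $k$ with $2n+2-k$ otherwise, so it is enough to evaluate $\mbP_n(I_{S_k,W_{k+1}})$ for $n+1\le k\le 2n+1$; the first two lines of the stated formula then follow by reflection.

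For the range $n+2\le k\le 2n$ I would use the dictionary between the $2n+2$ border edges of $T$ and the $n$ border edges of $B=F(T)\in\mcB_n$ that underlies relation \eqref{rela} in the proof of Lemma~\ref{lemmaCSTT}. Since $F$ deletes the cells above the main diagonal and the first column, the portion of the boundary of $T$ traversed after crossing the diagonal coincides, once the extra column re--attached by $F^{-1}$ is removed, with the boundary of $B$; under this identification the corner of $T$ at position $k$ is the corner of $B$ at position $k-n-1$, and as $k$ runs over $\{n+2,\dots,2n\}$ the index $k-n-1$ runs over all $n-1$ corner positions of a size-$n$ type--B tableau. Substituting $j=k-n-1$ into the expression $\frac{n-j+1}{2n}-\frac{(n-j)^2}{4n(n-1)}$ of Theorem~\ref{1cornerB} yields $\frac{2n-k+2}{2n}-\frac{(2n-k+1)^2}{4n(n-1)}$, which is the fourth line of the formula; the second line then follows from it by the reflection identity above.

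It remains to evaluate the two positions not inherited from a corner of $B$. These are exactly the corners created when $T$ is reconstructed from $B$: the corner at position $n+1$ is produced by gluing the two reflected halves of $T$ along the main diagonal, and the corners at positions $1$ and $2n+1$ (mirror images of one another) are produced by re--attaching the extra first column. They are precisely the two boundary terms of \eqref{rela}, whose sizes appear in \eqref{cornersSym}: the diagonal corner is present in $2^{n-1}n!$ of the $|\mcT^{sym}_{2n+1}|=2^nn!$ tableaux, hence with probability $\frac{2^{n-1}n!}{2^nn!}=\frac12$ (the third line), while each of the two first--column corners is present in $|\{B\in\mcB_n:S_n\}|=|\mcB_{n-1}|=2^{n-1}(n-1)!$ of them (since every type--B tableau of size $n-1$ has exactly one row--extension, the one with $S_n$, and recall $|\mcB_m|=2^mm!$), hence with probability $\frac{2^{n-1}(n-1)!}{2^nn!}=\frac1{2n}$ (the first and fifth lines).

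The step I expect to require the most work is making the border--edge dictionary of Lemma~\ref{Bijection} fully precise: that, for $n+2\le k\le 2n$, the corner of $T$ at position $k$ really is the corner of $B$ at position $k-n-1$, and that the three leftover positions are exactly the diagonal corner and the two first--column corners with the counts of \eqref{cornersSym} — this is the bijection--level bookkeeping that \cite{HL} carries out in detail for Lemma~\ref{lemmaCSTT}. Everything else is a substitution into Theorem~\ref{1cornerB} together with the reflection symmetry. As a final check — and the reason the corollary is recorded — summing the piecewise formula over $k=1,\dots,2n+1$ and collapsing by the reflection gives
\[
\mbE_nC_n=2\cdot\frac{1}{2n}+\frac12+2\sum_{k=2}^{n}\left(\frac{k}{2n}-\frac{(k-1)^2}{4n(n-1)}\right)=1+\frac{n}{2}-\frac{2n-1}{12}=\frac{4n+13}{12},
\]
which, via \eqref{exp}, is Theorem~\ref{thm:conjB}.
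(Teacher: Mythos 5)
Your proposal is correct and follows essentially the route the paper intends: the paper states this corollary without proof, deducing it from the border--edge correspondence underlying Lemma~\ref{lemmaCSTT} together with Theorem~\ref{1cornerB}, which is exactly your transport--plus--reflection argument (with the three exceptional positions $k=1,n+1,2n+1$ read off from the counts in \eqref{cornersSym}). The only details you defer --- the precise edge dictionary of the bijection $F$ --- are the same ones the paper itself relegates to the full version \cite{HL}, and your closing summation correctly reproduces $\mbE_nC_n=\frac{4n+13}{12}$, consistent with Theorem~\ref{thm:conjB} via \eqref{exp}.
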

Finally, we establish the following result which, when combined with \eqref{cornersSym} and \eqref{expCorns}, completes the proof of Theorem~\ref{thm:conjB}.
\begin{theorem}\label{cornersTB}
For  type--B permutation tableaux of size \begin{math}n\end{math} we have
\[\mbE_n C_n=\frac{4n+7}{24}-\frac{1}{2n}.\]\end{theorem}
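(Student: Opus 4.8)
The plan is to mimic exactly the proof of Theorem~\ref{thm:exp-c}: sum the one--corner probabilities provided by Theorem~\ref{1cornerB} over the relevant range of $k$. In view of \eqref{eq:corners} we have
\[
\mbE_n C_n=\sum_{k=1}^{n-1}\mbP_n\left(I_{M_k=S,M_{k+1}=W}\right)
=\sum_{k=1}^{n-1}\left(\frac{n-k+1}{2n}-\frac{(n-k)^2}{4n(n-1)}\right),
\]
so the whole statement reduces to a closed--form evaluation of two elementary sums. Substituting $j=n-k$, the first sum becomes $\frac1{2n}\sum_{j=1}^{n-1}(j+1)$ and the second becomes $\frac1{4n(n-1)}\sum_{j=1}^{n-1}j^2$; using $\sum_{j=1}^{n-1}j=\binom n2$ and $\sum_{j=1}^{n-1}j^2=\frac{(n-1)n(2n-1)}6$, the first piece is $\frac1{2n}\left(\binom n2+(n-1)\right)=\frac{(n-1)(n+2)}{4n}$ and the second is $\frac{2n-1}{24}$. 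Combining and simplifying, $\mbE_n C_n=\frac{(n-1)(n+2)}{4n}-\frac{2n-1}{24}=\frac{4n+7}{24}-\frac1{2n}$, which is the claimed formula.

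I would present this as a short proof in the same style as Theorem~\ref{thm:exp-c}: cite \eqref{eq:corners}, note that by linearity $\mbE_n C_n=\sum_{k=1}^{n-1}\mbP_n(I_{M_k=S,M_{k+1}=W})$, invoke Theorem~\ref{1cornerB} for each summand, and then record the two sums of powers and the final algebra. One point worth a sentence of care is the index range: the sum in \eqref{eq:corners} runs from $k=1$ to $k=n-1$ because the last border edge ($k=n$) cannot start a corner, so there is no boundary term to add here (unlike in the passage from $c(\mcP_n)$ to $c(\mcT_n)$), and indeed plugging $k=n$ into the formula of Theorem~\ref{1cornerB} would give $\frac1{2n}$ but that event is vacuous for corners; the formula is only used for $1\le k\le n-1$.

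There is essentially no obstacle here — the work is a routine summation — but the one thing to double--check is the arithmetic producing $\frac{4n+7}{24}-\frac1{2n}$, and in particular the cross--verification with the conjectured total. From \eqref{expCorns}, $c(\mcB_n)=2^n n!\left(\frac{4n+7}{24}-\frac1{2n}\right)=2^n n!\frac{4n+7}{24}-2^{n-1}(n-1)!$. Feeding this into Lemma~\ref{lemmaCSTT} gives
\[
c(\mcT^{sym}_{2n+1})=2c(\mcB_n)+2^n(n-1)!+2^{n-1}n!
=2^n n!\frac{4n+7}{12}-2^n(n-1)!+2^n(n-1)!+2^{n-1}n!,
\]
and $\frac{4n+7}{12}+\frac12=\frac{4n+7+6}{12}=\frac{4n+13}{12}$, so $c(\mcT^{sym}_{2n+1})=2^n n!\frac{4n+13}{12}$, matching Theorem~\ref{thm:conjB}. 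This consistency check is the best safeguard against a slip in the summation, so I would keep it in mind (even if it is relegated to \cite{HL} in the final write--up) while carrying out the computation.
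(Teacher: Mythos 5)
Your proof is correct and follows exactly the paper's route: the paper's own proof of this theorem is simply to sum the expression from Theorem~\ref{1cornerB} over $k=1,\dots,n-1$ via \eqref{eq:corners}, which is precisely what you do (you merely make the elementary summation and the consistency check with Theorem~\ref{thm:conjB} explicit).
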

\begin{proof}
The result is obtained by summing the expression from Theorem~\ref{1cornerB} from \begin{math}k=1\end{math} to \begin{math}n-1\end{math}.
\end{proof}
To conclude this section, note that Theorem~\ref{thm:conjB} could also be obtained by summing the expression from Corollary~\ref{specificcornerST} from \begin{math}k=1\end{math} to \begin{math}2n+1\end{math}.

\bibliographystyle{abbrvnat}
\bibliography{corners}

\end{document}